 \newtheorem{theo}{Theorem}[section]
  \newtheorem{lem}[theo]{Lemma}
  \newtheorem{prob}[theo]{Problem}
    \newtheorem{corollary}[theo]{Corollary}
    \newtheorem{prop}[theo]{Proposition}
  \newcommand{\dopu}{{:}\allowbreak\ }
\newcommand{\R}{{\mathbb{R}}}
\newcommand{\N}{{\mathbb{N}}}
\newcommand{\loglike}[1]{\mathop{\rm #1}\nolimits}
\newcommand{\ex}{\loglike{ext}}
\newcommand{\aconv}{{\mathrm{aconv}}}
\newcommand{\spn}{\mathrm{span}}
\newcommand{\bea}{\begin{eqnarray*}}
\newcommand{\eea}{\end{eqnarray*}}
\newcommand{\beq}{\begin{equation}}
\newcommand{\eeq}{\end{equation}}
\numberwithin{equation}{section}
\begin{document}
A technical version with some boring proofs included. The main article has already been published: Zavarzina O. \textit{Non-expansive bijections between unit balls of Banach spaces},   Annals of Functional Analysis, \textbf{9}, Number 2 (2018), 271--281. This longer
version posted for the convenience of readers.
\title[Non-expansive bijections between unit balls]{Non-expansive bijections between unit balls of Banach spaces}

\subjclass[2010]{46B20}

\keywords{non-expansive map; unit ball; plastic space; strictly convex space}

\author{Olesia Zavarzina}

\address{Department of Mathematics and Informatics\\ V.N. Karazin Kharkiv  National University\\ 61022 Kharkiv, Ukraine}
\email{olesia.zavarzina@yahoo.com}

\begin{abstract}
It is known that if  $M$ is a finite-dimensional Banach space, or a strictly convex  space, or the space $\ell_1$, then every non-expansive bijection $F: B_M \to B_M$ is an isometry. We extend these results to non-expansive bijections $F: B_E \to B_M$ between unit balls of two different Banach spaces. Namely, if $E$ is an arbitrary Banach space and $M$ is finite-dimensional or strictly convex, or the space $\ell_1$ then every non-expansive bijection  $F: B_E \to B_M$ is an isometry.
\end{abstract}
\maketitle


\section{Introduction}
 Let $M$ be a metric space. A map $F: M \to M$  is called \textit{non-expansive}, if $\rho(F(x), F(y)) \le \rho(x,y)$ for all $x,y \in M$. The space $M$ is called \textit{expand-contract  plastic} (or simply, an EC-space) if every non-expansive bijection from $M$ onto  itself is an isometry.

 It is known \cite[Theorem 1.1]{NaiPioWing} that every compact (or even totally bounded) metric space is expand-contract  plastic, so in particular every bounded subset of  $\R^n$ is an EC-space.

 The situation with bounded subsets of  infinite dimensional spaces is different. On the one hand, there is  a  non-expand-contract  plastic bounded closed convex subset of a Hilbert space \cite[Example 2.7] {CKOW2016} (in fact, that set is an ellipsoid), but on the other hand, the unit ball of a Hilbert space, and in general  the unit ball of every strictly convex Banach space is an EC-space  \cite[Theorem 2.6] {CKOW2016}. It is unknown whether the strict convexity condition in  \cite[Theorem 2.6] {CKOW2016} can be omitted, that is, in other words, the following problem arises.

 \begin{prob} \label{problem1}
For what Banach spaces $Y$ every bijective non-expansive map $F: B_Y \to B_Y$  is an isometry?  Is this true for every Banach space?
 \end{prob}

Outside of strictly convex spaces, Problem \ref{problem1} is solved positively for all finite-dimensional spaces (because of the compactness of the unit ball), and for the space $\ell_1$  \cite[Theorem 1]{KZ}.

To the best of our knowledge, the following natural extension of  Problem \ref{problem1} is also open.

 \begin{prob} \label{problem2}
 Is it true that for every pair $(X, Y)$ of Banach spaces,  every bijective non-expansive map $F: B_X \to B_Y$  is an isometry?
 \end{prob}

 An evident bridge between these two problems is the third one, which we also are not able to solve.

 \begin{prob} \label{problem3}
Let $X, Y$ be Banach spaces that admit a bijective non-expansive map $F: B_X \to B_Y$. Is it true that spaces $X$ and $Y$ are isometric?
 \end{prob}

 In fact, if one solves Problem \ref{problem2} in positive, one evidently solves also Problem \ref{problem3}. On the other hand, for a fixed pair  $(X, Y)$  the positive answers to Problems \ref{problem1} and \ref{problem3}  would imply the same for Problem \ref{problem2}.
 \vspace{2mm}

The aim of this article is to demonstrate that for all spaces $Y$ where Problem \ref{problem1} is known to have a positive solution (i.e.  strictly convex spaces, $\ell_1$, and finite-dimensional spaces),  Problem \ref{problem2} can be solved in the  positive for all pairs of the form $(X, Y)$.  In fact, our result for pairs $(X,Y)$ with  $Y$ being strictly convex repeats the corresponding proof of the case $X=Y$ from \cite[Theorem 2.6] {CKOW2016} almost word-to-word. The proof for pairs  $(X,\ell_1)$ on some stage needs additional work comparing to its particular case $X = \ell_1$ from \cite[Theorem 1]{KZ}. The most difficult one is the finite-dimensional case, where the approach from  \cite[Theorem 1.1]{NaiPioWing} is not applicable for maps between two different spaces, because it uses iterations of the map. So, for finite-dimensional spaces we had to search for a completely different proof.  Our proof in this case uses some ideas from \cite{CKOW2016} and \cite{KZ} but elaborates them a lot.

 \vspace{3mm}

 There is another similar circle of problems that motivates our study.

   In 1987, D.~Tingley  \cite{ting} proposed the following question: let $f$ be a bijective isometry between the unit spheres $S_X$ and $S_E$ of real Banach spaces $X$, $E$
respectively. Is it true that $f$ extends to a linear
(bijective) isometry $F: X \to E$ of the
corresponding spaces?

Let us mention that this is equivalent to
the fact that the following natural positive-homogeneous extension $F: X \to E$ of $f$ is linear:
$$
F(0) = 0,\qquad F(x) = \|x\|\,f\left(x / \|x\|\right)\quad \bigl(x\in X\setminus\{0\}\bigr).
$$
Since according to P.~Mankiewicz's theorem \cite{mank} every bijective isometry between convex bodies can be uniquely extended to an affine isometry of the whole spaces,  Tingley's problem can be reformulated as follows:

 \begin{prob} \label{problem4}
 Let $F: B_X \to B_E$ be a positive-homogeneous map, whose restriction to $S_X$ is a bijective isometry between $S_X$ and $S_E$. Is it true that $F$ is an isometry itself?
 \end{prob}

There is a number of publications devoted to Tingley's problem
(see \cite{ding} for a survey of corresponding results) and, in particular, the problem is solved in the positive for many concrete classical Banach spaces. Surprisingly,  for general spaces this innocently-looking question remains open even in dimension two. For finite-dimensional polyhedral spaces the problem is solved in the positive by V.~Kadets and M.~Mart\'{\i}n in 2012 \cite{kad-mar}, and the positive solution for the class of generalized lush spaces was given by Dongni Tan, Xujian Huang, and Rui Liu  in 2013 \cite{TanHuangLiu}.  A step in the proof of the latter result was a lemma  (Proposition 3.4 of \cite{TanHuangLiu}) which in our terminology says that if the map $F$ in Problem \ref{problem4} is non-expansive, then the problem has a positive solution.  So, the problem which we address in our paper (Problem \ref{problem2}) can be considered as a much stronger variant of that lemma.


 \section{Preliminaries}
  In the sequel, the letters $X$ and $Y$ always stand for real Banach spaces. We denote by $S_X$ and $B_X$ the unit sphere and the closed unit ball of $X$ respectively. For a convex set $A \subset X$ denote by $\ex(A)$ the set of extreme points of $A$; that is, $x \in \ex(A)$ if $x \in A$ and for every $y \in X\setminus\{0\}$ either $x + y \not\in A$ or  $x - y \not\in A$. Recall that $X$ is called strictly convex if all elements of $S_X$ are extreme points of $B_X$, or in other words, $S_X$ does not contain non-trivial line segments. Strict convexity of $X$ is equivalent to the strict triangle inequality $\|x +y\| < \|x\| + \|y\|$ holding for all pairs of vectors $x, y \in X$ that do not have the same direction.
For subsets $A, B \subset X$ we use the standard notation $A+B = \{x + y\dopu x \in A, y \in B\}$ and $a A = \{ax \dopu x \in A\}$.\\
Now let us reformulate the results of \cite{CKOW2016} on the case of two different spaces. In order to do it we give the following lemmas from the same source.
For  $x \in S_X$ and $a \in (0,1)$ let $D(x, a):= a B_X \cap (x + (1 -a) B_X)$.
\begin{lem}[Lemma 2.1 of \cite{CKOW2016}] \label{subset}
 For every  $x \in S_X$ and $a \in (0,1)$
\beq \label{eqDxa}
 D(x, a) = a\{x + y \in B_X: x - \frac{a}{1-a} y \in B_X \}.
\eeq
When  $x$ is an extreme point of $B_X$, then $D(x, a) = \{ax\}$. When  $x$ is not an extreme point of $B_X$, then $D(x, \frac12)$ consists of more than one point.
\end{lem}

\begin{lem} [Lemma 2.2 of \cite{CKOW2016}]  \label{subsetD1}
 $D_1(x) = -x + \{x + y \in B_X: x -  y \in B_X \}$.
When  $x$ is an extreme point of $B_X$, then $D_1(x) = \{0\}$.
\end{lem}

The following theorem generalizes \cite[Theorem 2.3]{CKOW2016}, where the case $X=Y$ was considered. It can be demonstrated repeating the proof of \cite[Theorem 2.3]{CKOW2016} almost word to word.
\begin{theo} \label{wing-sc}
Let $F: B_X \to B_Y$ be a non-expansive bijection.  In the above notations the following hold.
\begin{enumerate}
\item  $F(0) = 0$.
\item  $F^{-1}(S_Y) \subset S_X$.
\item  $F(D(x, a)) \subset D(F(x), a)$  for all $x \in F^{-1}(S_Y)$ and $a \in (0,1)$.
\item  If $F(x)$ is an extreme point of $B_Y$, then $F(ax) = a F(x)$ for all $a \in (0,1)$.
\item  If $F(x)$ is an extreme point of $B_Y$, then $x$ is also an extreme point of $B_X$.
\item  If $F(x)$ is an extreme point of $B_Y$, then $F(-x) = -F(x)$.
\end{enumerate}
Moreover, if $Y$ is strictly convex, then
\begin{enumerate}
\item[(i)]  $F$ maps $S_X$ bijectively onto $S_Y$;
\item[(ii)]  $F(ax) = a F(x)$ for all $x \in S_X$ and $a \in (0,1)$;
\item[(iii)]  $F(-x) = -F(x)$ for all $x \in S_X$.
\end{enumerate}
\end{theo}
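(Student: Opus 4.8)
The plan is to prove the five unconditional assertions in order, each feeding the next, and then to read off the strictly convex conclusions almost for free. For (1) I would argue by contradiction using surjectivity: if $u := F(0) \neq 0$, then non-expansiveness gives $\|F(x) - u\| \le \|x\| \le 1$ for every $x \in B_X$, so the whole image $B_Y = F(B_X)$ lies in the ball of radius $1$ about $u$; but $-u/\|u\| \in S_Y \subseteq B_Y$ has distance $1 + \|u\| > 1$ from $u$, a contradiction. Hence $F(0)=0$, and (2) is then immediate, since $\|F(x)\| = \|F(x)-F(0)\| \le \|x\|$ forces $\|x\|=1$ whenever $\|F(x)\|=1$.

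For (3), write $p = F(x)$ with $p$ extreme (so $p \in S_Y$ and, by (2), $x \in S_X$), fix $a \in (0,1)$ and set $w = F(ax)$. Non-expansiveness gives $\|w\| \le a$ and $\|w-p\| \le 1-a$, while $\|w\| + \|w-p\| \ge \|p\| = 1 = a + (1-a)$; thus both inequalities are equalities and $p = w + (p-w)$ realizes equality in the triangle inequality. Writing $p$ as the convex combination $a\,(w/a) + (1-a)\,(p-w)/(1-a)$ of two unit vectors, extremality of $p$ forces $w/a = p$, i.e.\ $F(ax) = ap$. This ``equality in the triangle inequality at an extreme point'' mechanism is what I would reuse below.

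I would prove (5) before (4). Using surjectivity, put $x' = F^{-1}(-p)$ (here $-p$ is extreme because $B_Y$ is symmetric); by (2) and (3) the diameters $[0,x]$ and $[0,x']$ map linearly onto $[0,p]$ and $[0,-p]$, and comparing $\|F(\tfrac12 x) - F(\tfrac12 x')\| = \|p\| = 1$ with $\tfrac12\|x-x'\|$ gives $\|x-x'\| = 2$. For the midpoint $m = \tfrac12(x+x')$ one has $\|m - \tfrac12 x\| = \|m - \tfrac12 x'\| = \tfrac12$, so non-expansiveness together with $F(\tfrac12 x) = \tfrac12 p$ and $F(\tfrac12 x') = -\tfrac12 p$ yields $\|F(m) - \tfrac12 p\| \le \tfrac12$ and $\|F(m) + \tfrac12 p\| \le \tfrac12$; summing and invoking the mechanism of (3) (applied to $u = F(m) + \tfrac12 p$, which satisfies $\|u\| = \|u-p\| = \tfrac12$) forces $F(m) = 0 = F(0)$, whence injectivity gives $m = 0$, i.e.\ $x' = -x$ and $F(-x) = -p$. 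For (4) my strategy is: assuming $x$ is not extreme, choose $y \neq 0$ with $x \pm y \in B_X$ (convexity then forces $x \pm y \in S_X$), and try to contradict extremality of $p$ by proving $F(x+y) + F(x-y) = 2p$; since $F(x+y) \neq F(x-y)$ by injectivity, this would exhibit $p$ as the midpoint of two distinct points of $B_Y$. \emph{This is the step I expect to be the main obstacle.} The desired symmetry is equivalent to $F$ preserving the distance $\|(x+y)-(x-y)\| = 2\|y\|$, and the available inequalities do not force it by local geometry alone---near an extreme point the two images can sit off-center---so bijectivity (surjectivity together with injectivity) must enter in an essential way, exactly as in (1) and (5); this is the delicate point the corresponding proof in \cite{CKOW2016} has to navigate, and the two-space setting changes nothing in it.

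Finally, the strictly convex conclusions follow quickly, since then every point of $S_Y$ is extreme. For (i), surjectivity and (2) give $S_Y \subseteq F(S_X)$; moreover, for each $y \in S_Y$ part (3) yields $F^{-1}([0,y]) = [0, F^{-1}(y)]$, whence $B_X = F^{-1}(B_Y) = \bigcup_{y \in S_Y}[0, F^{-1}(y)]$. Any $x_0 \in S_X$ thus lies on some segment $[0,x]$ with $x \in F^{-1}(S_Y) \subseteq S_X$; since $\|x_0\| = \|x\| = 1$ this forces $x_0 = x$, so $F(x_0) \in S_Y$, and $F$ maps $S_X$ onto $S_Y$, bijectively by injectivity. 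Then (ii) and (iii) are simply (3) and (5) applied to each $x \in S_X$, whose image $F(x) \in S_Y$ is automatically extreme.
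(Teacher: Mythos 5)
Your arguments for items (1), (2), (3), (5) and for the strictly convex conclusions (i)--(iii) are correct, and they are essentially the intended ones: the paper gives no self-contained proof of Theorem \ref{wing-sc}, stating instead that it is obtained by repeating the proof of \cite[Theorem 2.3]{CKOW2016} almost word for word, and your surjectivity argument for $F(0)=0$, the convex-decomposition-at-an-extreme-point trick for (3), the midpoint-plus-injectivity argument for (5), and the covering of $B_X$ by the segments $[0,F^{-1}(y)]$, $y\in S_Y$, for (i) are exactly of that kind. You also correctly observe that (i)--(iii) need only (1), (2), (3), (5), so the problem discussed below does not affect the strictly convex part.

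However, item (4) is genuinely missing, and you say so yourself: you reduce it to proving $F(x+y)+F(x-y)=2F(x)$ and then concede that the available inequalities do not yield this and that it is the delicate point the proof in \cite{CKOW2016} must navigate. An announced strategy together with an admission that its key step is out of reach is not a proof; as written, one of the five assertions of the theorem is unestablished, and it is an assertion the paper actually uses later (in the proof of Theorem \ref{conv-smooth} it is precisely item (4) that makes each $g_n=F^{-1}(e_n)$ an extreme point of $B_X$). Moreover, your parenthetical claim that the desired identity is ``equivalent to $F$ preserving the distance $\|(x+y)-(x-y)\|=2\|y\|$'' is incorrect in both directions: without strict convexity, equality $\|A+B\|=\|A\|+\|B\|$ for $A=F(x+y)-F(x)$, $B=F(x)-F(x-y)$ does not force $A=B$, and conversely the midpoint identity is compatible with $\|F(x+y)-F(x-y)\|<2\|y\|$. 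To complete the proposal you must supply the actual argument for item (4) from \cite[Theorem 2.3]{CKOW2016}, verifying (as you did for the other items) that nothing in it depends on the source and target spaces coinciding; everything else in your write-up is in order.
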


\begin{proof} (1) The point 0 has a special property: it is the unique point of the ball whose distance to all other points of the ball is less than or equal to 1. Evidently, a non-expansive bijection preserves this property.
\vspace{1 mm}

 (2) Notice that (1) implies that if $\|x\|_X < 1$, then  $\|F(x)\|_Y < 1$.Hence $F$ maps interior points into interior points.
So  $F^{-1}(S_Y) \subset S_X$.
\vspace{3 mm}

 (3) $D(x, a)$ is the set of those points $z$ such that $\|z\|_X \le a$ and $\|x -  z\|_X \le 1 - a$. So for every $z \in D(x, a)$,   $F(z)$ must satisfy the conditions $\|F(z)\|_Y \le a$ and $\|F(x) -  F(z)\|_Y \le 1 - a$, that is, $F(z) \in D(F(x), a)$.
  \vspace{3 mm}

(4) Let $F(x) \in \ex(B_Y)$. Then $D(F(x), a) = \{aF(x)\}$ by Lemma \ref{subset}.  Since for every $a \in (0,1)$ we have $ax \in D(x, a)$, it follows that $F(ax) \in D(F(x), a) = \{aF(x)\}$.
\vspace{3 mm}

(5) Suppose $F(x)$ is an extreme point of $B_Y$.  Then $D(F(x), 1/2)$ consists of one point, so (3) and the injectivity of $F$ imply that $D(x, 1/2)$ consists of one point, that is,  $x \in \ex(B_X)$.
\vspace{3 mm}

(6) Suppose that $F(x)$ is an extreme point of $B_Y$. By the surjectivity of $F$ there is a
$y \in S_X$ such that $F(y) = -F(x)$. Then $\|x - y\|_X \ge \|F(x) - (-F(x))\|_Y = 2$.
Let $z = \frac12 (x + y) \in B_X$. Since $\|x - z\|_X = \|y - z\|_X \le 1$, we have that $\|F(x) - F(z)\|_Y \le 1$
and $\|F(y) - F(z)\|_Y = \|-F(x) - F(z)\|_Y \le 1$. This means that $F(z) \in D_1(F(x)) =  \{0\}$, so $z = 0$ and $y = -x$.  Hence $F(-x) = -F(x)$.
\vspace{4 mm}

Now, let us proceed with the case when Y is strictly convex.
 \vspace{5 mm}

 (i) Assume to the contrary that $F(S_X) \neq S_Y$. By (2) this means that
$F(y) \not \in S_Y$ for some $y \in S_X$.  By the surjectivity of
$F$ there is an $x \in S_X$ such that $F(x) = F(y)/ \|F(y)\|_Y$. Then
from (4) it follows that $F(\|F(y)\|_Y x) = \|F(y)\|_Y F(x) = F(y)$, which contradicts the injectivity of $F$.
\vspace{3 mm}

Because of strict convexity of Y the items (ii) and (iii) are now evident consequences of (4) and (6) respectively.
\end{proof}

Following notations from \cite{CKOW2016} for every  $u \in S_X $ and  $v \in X$ denote by  $u^*(v)$  the directional derivative of the function $x \mapsto \|x\|_X$ at the point $u$ in the direction $v$:
$$
u^*(v) = \lim_{a \to 0^+}\frac1a \left(\|u + a v\|_X - \|u\|_X\right).
$$
By the convexity of the function $x \mapsto \|x\|_X$, the directional derivative exists.
If $E \subset X$ is a subspace and $u$ is a smooth point
of $S_E$ then $u^*|_E$ (the restriction of $u^*$ to $E$) is the unique norm-one
linear functional on $E$ that satisfies $u^*|_E(u) = 1$ (the supporting functional at point $u$). In general $u^*: X \to \R$ is not linear, but
it is sub-additive and positively homogeneous:
\begin{equation} \label{sublinear-1}
u^*(y_1) + u^*(y_2) \ge u^*(y_1 + y_2), \,\, u^*(ty) = tu^*(y), \ \text{for} \ t \ge 0.
\end{equation}
For proving these facts we will use the triangle inequality:
\begin{align*}
u^*(y_1 + y_2)  &=  \lim_{a \to 0^+}\frac1a \left(\|u + a (y_1+y_2)\|_X - \|u\|_X\right)=\\
&= \lim_{a \to 0^+}\frac{1}{2a} \left(2\|u + 2a (y_1+y_2)\|_X - 2\|u\|_X\right)=\\
&= \lim_{b \to 0^+}\frac{1}{b} \left(2\|u + b (y_1+y_2)\|_X - 2\|u\|_X\right)\le\\
&\le \lim_{b \to 0^+}\frac{1}{b} \left(\|u + b y_1\|_X - \|u\|_X\right)+\\
&+ \lim_{b \to 0^+}\frac{1}{b} \left(\|u + b y_2\|_X - \|u\|_X\right) = u^*(y_1) + u^*(y_2). \\
\end{align*}
\begin{align*}
tu^*(y)&= \lim_{a \to 0^+}\frac{t}{a} \left(\|u + ay\|_X - \|u\|_X\right) = \\
&=\lim_{b \to 0^+}\frac{1}{b} \left(\|u + bty\|_X - \|u\|_X\right)= u^*(ty).\\
\end{align*}
If one substitutes in the subadditivity condition $y_1 = v$ and $y_2 = -v$,
one gets
\begin{equation} \label{sublinear}
u^*(v)  \ge -u^*(-v).
\end{equation}
Also $u^*$  possesses the following property: for arbitrary $y_1, y_2 \in X$
\begin{equation} \label{x*}
u^*(y_1) - u^*(y_2) \le \|y_1 - y_2\|_X.
\end{equation}
Let us demonstrate the latter property:
\begin{align*}
u^*(y_1) - u^*(y_2)  &=  \lim_{a \to 0^+}\frac1a \left(\|u + a y_1\|_X - \|u + a y_2\|_X \right)\\
 &\le \lim_{a \to 0^+}\frac1a \left(\|u + a y_1 - u - a y_2\|_X \right) =  \|y_1 - y_2\|_X.
\end{align*}

The next lemma generalizes in a straightforward way \cite[Lemma 2.4]{CKOW2016}.
\begin{lem}\label{maj}
Let $F: B_X \to B_Y$ be a bijective non-expansive map, and suppose that  for some $u \in S_X $ and  $v \in B_X$ we have $u^*(-v)  = -u^*(v)$, $\|F(u)\| = \|u\|$ and $F(av) = a F(v)$ for all $a \in [-1,1]$.  Then $(F(u))^*(F(v)) = u^*(v)$.
\end{lem}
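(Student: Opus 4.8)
The plan is to pin down the value of $(F(u))^*(F(v))$ by squeezing it between $u^*(v)$ from above and from below. Note first that since $u \in S_X$ and $\|F(u)\| = \|u\| = 1$, the point $F(u)$ lies on $S_Y$, so the notation $(F(u))^*$ is meaningful: it is the directional derivative of $\|\cdot\|_Y$ at $F(u)$, and it inherits sub-additivity and positive homogeneity just like $u^*$. The homogeneity hypothesis $F(av) = aF(v)$ on $[-1,1]$ also yields, taking $a=-1$, the relation $F(-v) = -F(v)$, which I will use repeatedly.

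First I would prove the upper bound $(F(u))^*(F(v)) \le u^*(v)$. For $a \in [0,1]$ the homogeneity gives $F(-av) = -aF(v)$, hence $aF(v) = -F(-av)$ and therefore $F(u) + aF(v) = F(u) - F(-av)$. Non-expansiveness of $F$ then gives
\[
\|F(u) + aF(v)\|_Y = \|F(u) - F(-av)\|_Y \le \|u + av\|_X .
\]
Subtracting the common value $\|F(u)\|_Y = \|u\|_X = 1$, dividing by $a > 0$, and letting $a \to 0^+$ turns this into $(F(u))^*(F(v)) \le u^*(v)$.

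The reverse inequality is the genuinely delicate step, precisely because $(F(u))^*$ is not linear and so cannot simply be negated. My plan is to rerun the previous estimate with $-v$ in place of $v$. From $F(av) = aF(v)$ and $F(-v) = -F(v)$ one gets $aF(-v) = -aF(v) = -F(av)$, so $F(u) + aF(-v) = F(u) - F(av)$, and non-expansiveness gives $\|F(u) + aF(-v)\|_Y \le \|u - av\|_X$ for $a \in [0,1]$. Passing to the limit as before yields $(F(u))^*(F(-v)) \le u^*(-v)$. Now I invoke the two standing hypotheses: using $F(-v) = -F(v)$ on the left and $u^*(-v) = -u^*(v)$ on the right, this reads $(F(u))^*(-F(v)) \le -u^*(v)$.

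Finally I would combine this with sub-additivity of the directional derivative at $F(u)$: since $(F(u))^*(F(v)) + (F(u))^*(-F(v)) \ge (F(u))^*(0) = 0$, we obtain $(F(u))^*(F(v)) \ge -(F(u))^*(-F(v)) \ge u^*(v)$. Together with the upper bound from the second paragraph this forces $(F(u))^*(F(v)) = u^*(v)$, as required. The only real obstacle is the lower bound, and it is exactly there that both special hypotheses are consumed: the full homogeneity of $F$ along the whole segment $[-v,v]$ lets me control $-v$ by the same non-expansive estimate, while the symmetry $u^*(-v) = -u^*(v)$ compensates for the non-linearity of $u^*$ and $(F(u))^*$, which otherwise would only give a one-sided comparison.
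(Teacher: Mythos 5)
Your proof is correct and follows essentially the same route as the argument the paper relies on (the proof of Lemma 2.4 in the cited work of Cascales, Kadets, Orihuela and Wingler, which this paper invokes rather than reproduces): the upper bound via non-expansiveness and homogeneity, the same bound applied to $-v$, and sub-additivity of $(F(u))^*$ together with $u^*(-v)=-u^*(v)$ to close the gap. Nothing is missing.
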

\begin{proof}
 \begin{align*}
(F(u))^*(-F(v)) &= \lim_{a \to 0^+}\frac1a \left(\|F(u) - a F(v)\|_Y - \|F(u)\|_Y\right)\\
                           &= \lim_{a \to 0^+}\frac1a \left(\|F(u) - F(av)\|_Y - \|u\|_X\right)\\
                           &\le \lim_{a \to 0^+}\frac1a \left(\|u - a v\|_X - \|u\|_X\right) = u^*(-v),
\end{align*}
so $(F(u))^*(-F(v)) \le  u^*(-v)$.  Substituting $-v$ in the place of $v$, we get also
$$
(F(u))^*(-F(-v)) \le  u^*(v).
$$
Using these two inequalities together with (\ref{sublinear}), we have
\bea
(F(u))^*(F(v)) &\ge& - (F(u))^*(-F(v)) \ge - u^*(-v) = u^*(v)  \\
&\ge& (F(u))^*(-F(-v)) =(F(u))^*(F(v)).
\eea
So all the inequalities in this chain are equalities.
\end{proof}

The following result and Corollary \ref{corollar111} are extracted from the proof of  \cite[Lemma 2.5]{CKOW2016}.

\begin{lem} \label{conv-smooth-prel}
  Let $F: B_X \to B_Y$ be a bijective non-expansive map such that $F(S_X) = S_Y$. Let $V \subset S_X$ be such a subset that $F(av) = a F(v)$ for all $a \in [-1,1]$, $v \in V$.  Denote $A = \{tx: x \in V, t \in [-1,1] \}$, then $F|_A$ is a bijective  isometry between $A$ and $F(A)$.
\end{lem}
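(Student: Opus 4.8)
The inequality $\|F(p)-F(q)\|\le\|p-q\|$ for $p,q\in A$ is just non-expansiveness, so $F|_A$ is automatically a non-expansive bijection of $A$ onto $F(A)$ and the whole content is the reverse inequality $\|F(p)-F(q)\|\ge\|p-q\|$. My first step is to normalise the configuration. Writing $p=t_1x_1$ and $q=t_2x_2$ with $x_1,x_2\in V$ and $t_1,t_2\in[-1,1]$ and using $F(t_ix_i)=t_iF(x_i)$, both $\|t_1x_1-t_2x_2\|$ and $\|t_1F(x_1)-t_2F(x_2)\|$ are positively homogeneous of degree one in $(t_1,t_2)$ and unchanged under $(t_1,t_2)\mapsto(-t_1,-t_2)$; dividing by $\max(|t_1|,|t_2|)$ and, if necessary, interchanging the indices (the assertion is symmetric in $x_1,x_2$), I reduce everything to the one-parameter inequality $\|F(x_1)-\tau F(x_2)\|\ge\|x_1-\tau x_2\|$ for arbitrary $x_1,x_2\in V$ and $\tau\in[-1,1]$.

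To produce a lower bound for the left-hand side I would feed the directional-derivative machinery of Lemma \ref{maj} into the basic estimate \eqref{x*}. Fix $x_1,x_2\in V$ and note that the pair $(x_1,x_2)$ satisfies almost all hypotheses of Lemma \ref{maj}: $x_1\in S_X$, and $\|F(x_1)\|=\|x_1\|=1$ because $F(S_X)=S_Y$, while $F(ax_2)=aF(x_2)$ for $a\in[-1,1]$ since $x_2\in V$. Granting the remaining symmetry hypothesis $x_1^*(-x_2)=-x_1^*(x_2)$, Lemma \ref{maj} transfers the directional derivative exactly, $(F(x_1))^*(F(x_2))=x_1^*(x_2)$. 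Applying \eqref{x*} on $Y$ to the sublinear functional $(F(x_1))^*$ at the vectors $F(x_1)$ and $\tau F(x_2)$, and using $(F(x_1))^*(F(x_1))=1$, gives $\|F(x_1)-\tau F(x_2)\|\ge 1-\tau\,(F(x_1))^*(F(x_2))$, and the plan is to match this, through the transfer above, with the corresponding quantity on the $X$-side.

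The genuine difficulty is that this first-order bound, taken at the fixed base point $x_1$, is sharp only to first order in $\tau$: it equals the tangent line to the convex function $\tau\mapsto\|x_1-\tau x_2\|$ at $\tau=0$, hence is strictly smaller than $\|x_1-\tau x_2\|$ as soon as that function is not affine near $0$. To recover the exact value one must run \eqref{x*} not at $x_1$ but at the unit vector norming the difference $x_1-\tau x_2$, and then transport that functional to $Y$ by Lemma \ref{maj}; concretely I would compare the two convex functions $\phi(\tau)=\|x_1-\tau x_2\|$ and $\psi(\tau)=\|F(x_1)-\tau F(x_2)\|$, which satisfy $\psi\le\phi$ and $\psi(0)=\phi(0)=1$, and try to force $\psi=\phi$ by matching their one-sided derivatives at every $\tau$, since a convex function lies above each of its tangents. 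The crux -- and the step I expect to absorb all the real work, exactly as in the proof of \cite[Lemma 2.5]{CKOW2016} -- is that this requires Lemma \ref{maj} to be applicable at the normalised difference $(x_1-\tau x_2)/\|x_1-\tau x_2\|$, i.e.\ that this vector also enjoys the homogeneity $F(a\,\cdot)=aF(\cdot)$ and the symmetry of the directional derivative. Verifying these two conditions along the whole segment (the symmetry being where the oddness $F(-v)=-F(v)$ coming from the case $a=-1$, together with $F(S_X)=S_Y$ and Theorem \ref{wing-sc}, must be exploited) is the heart of the matter; once it is in place the convexity comparison closes the argument and yields $\psi=\phi$, hence the reverse inequality and the lemma.
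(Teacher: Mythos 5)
Your proposal assembles the right ingredients (Lemma \ref{maj} plus inequality \eqref{x*}) and correctly diagnoses why the naive first-order bound $\|F(x_1)-\tau F(x_2)\|\ge 1-\tau\,x_1^*(x_2)$ is too weak: it is only the tangent line at $\tau=0$ to the convex function $\phi(\tau)=\|x_1-\tau x_2\|$. But the proof stops exactly at the step you yourself call the heart of the matter, and the route you sketch for closing it would not work. You propose to apply Lemma \ref{maj} with base point $u=(x_1-\tau x_2)/\|x_1-\tau x_2\|$ and to verify that this vector ``also enjoys the homogeneity $F(a\,\cdot)=aF(\cdot)$ and the symmetry of the directional derivative.'' This misreads the hypotheses of Lemma \ref{maj}: the homogeneity $F(av)=aF(v)$ is required only of the \emph{direction} vector $v$, never of the base point $u$; what $u$ needs is $u\in S_X$, $\|F(u)\|=\|u\|$ (automatic here from $F(S_X)=S_Y$), and the symmetry $u^*(-v)=-u^*(v)$. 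Demanding homogeneity of $F$ at the normalized difference is therefore unnecessary --- and also unobtainable: that point generally lies in neither $V$ nor $A$, and nothing in the hypotheses controls $F$ off $A$; if one could prove homogeneity there, the lemma would be nearly self-proving. Moreover, even the symmetry condition can genuinely fail at that one point, since the norming point of $x_1-\tau x_2$ may be a non-smooth point of the section $\spn\{x_1,x_2\}$. Your suggestion to derive the symmetry from the oddness $F(-v)=-F(v)$ is also misguided: $u^*(-v)=-u^*(v)$ is a property of the norm of $X$ alone and has nothing to do with $F$.

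The paper's proof dissolves both difficulties by never using the exact norming functional of the difference. Fix $y_1,y_2\in A$, set $E=\spn\{y_1,y_2\}$, and let $W$ be the (dense) set of smooth points of $S_E$. For every $x\in W$ the restriction $x^*|_E$ is linear, so $x^*(-y_i)=-x^*(y_i)$; also $\|F(x)\|=1$ since $F(S_X)=S_Y$, and $F(ay_i)=aF(y_i)$ holds by hypothesis because $y_i\in A$. Hence Lemma \ref{maj} applies at \emph{every} $x\in W$ with $v=y_i$, giving $(F(x))^*(F(y_i))=x^*(y_i)$. Since supporting functionals taken at a dense set of points of $S_E$ form a norming set for $E$ (Lemma \ref{norm}), one gets
\begin{align*}
\|y_1-y_2\| &= \sup\{x^*(y_1)-x^*(y_2): x\in W\}\\
&= \sup\{(F(x))^*(F(y_1))-(F(x))^*(F(y_2)): x\in W\} \le \|F(y_1)-F(y_2)\|,
\end{align*}
the last step by \eqref{x*}. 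The supremum over the dense family $W$ replaces the single sharp functional you were trying to legitimize, and the only points at which smoothness is needed are points where it holds by choice. This is the missing idea in your proposal; without it (or some substitute) the argument does not close.
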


\begin{proof}
Fix arbitrary $y_1, y_2 \in A$.   Let $E = \spn\{y_1, y_2\}$, and let $W \subset S_E$
be the set of smooth points of $S_E$ (which is dense in $S_E$). All the functionals $x^*$, where $x \in W$,  are linear on $E$, so $x^*(-y_i) = -x^*(y_i)$, for $i = 1,2$.  Also, according to our assumption, $F(ay_i) = a F(y_i)$ for all $a \in [-1,1]$. Now we can apply Lemma \ref{maj}.
\bea
 \|F(y_1) - F(y_2)\|_Y &\le& \|y_1 - y_2\|_X = \sup\{x^*(y_1 - y_2) : x \in W\} \\
 &=& \sup\{x^*(y_1) - x^*(y_2) : x \in W\}\\
&=&  \sup\{(F(x))^*(F(y_1)) - (F(x))^*(F(y_2)) : x \in W\} \\
&\le&
 \|F(y_1) - F(y_2)\|_Y,
\eea
where  on the last step we used the inequality \eqref{x*}. So $ \|F(y_1) - F(y_2)\| = \|y_1 - y_2\|$.
\end{proof}

\begin{corollary} \label{corollar111}
  If $F: B_X \to B_Y$ is a bijective non-expansive function that satisfies (i), (ii), and (iii) of Theorem \ref{wing-sc}, then $F$ is an isometry.
\end{corollary}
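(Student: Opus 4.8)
The plan is to obtain the conclusion as an immediate consequence of Lemma~\ref{conv-smooth-prel}, applied with the largest possible choice $V = S_X$. The hypotheses needed there are that $F$ be a bijective non-expansive map with $F(S_X) = S_Y$ and that $F(av) = aF(v)$ hold for all $v \in V$ and all $a \in [-1,1]$. Condition (i) supplies $F(S_X) = S_Y$ directly, so the only point requiring verification is that the homogeneity in (ii), stated only for $a \in (0,1)$, in fact persists for every $a \in [-1,1]$.

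The first step is therefore to upgrade (ii) to the full interval. For $a \in (0,1)$ the relation $F(av) = aF(v)$ is (ii) itself, the case $a = 1$ is trivial, and $a = 0$ reduces to $F(0) = 0$, available from part~(1) of Theorem~\ref{wing-sc}. For negative scalars I would set $a = -b$ with $b \in (0,1]$ and compute
\[
F(av) = F\bigl(b(-v)\bigr) = b\,F(-v) = -b\,F(v) = aF(v),
\]
feeding the unit vector $-v$ into (ii) for the middle equality and invoking (iii) to rewrite $F(-v)$ as $-F(v)$. This yields $F(av) = aF(v)$ for all $v \in S_X$ and all $a \in [-1,1]$.

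The second step is merely to identify the set $A$ produced by the lemma. With $V = S_X$ one has $A = \{tx : x \in S_X,\ t \in [-1,1]\}$, and since every nonzero $y \in B_X$ equals $\|y\|\,(y/\|y\|)$ with $y/\|y\| \in S_X$ and $\|y\| \in (0,1]$, while $0 = 0 \cdot x$, this set is exactly $B_X$. Hence $F(A) = F(B_X) = B_Y$ by bijectivity, and Lemma~\ref{conv-smooth-prel} gives that $F|_A = F$ is a bijective isometry between $B_X$ and $B_Y$, which is the assertion.

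I do not expect a genuine obstacle: the whole analytic content (the directional-derivative functionals and the inequality \eqref{x*}) is already packaged inside Lemma~\ref{conv-smooth-prel}, so only two routine matters remain, namely the sign bookkeeping for $a \in [-1,0]$ and the observation that $V = S_X$ forces $A$ to fill the entire ball. The one thing to keep in mind is that the appeal to the lemma relies on $F$ being non-expansive, which is part of the standing hypotheses on $F$ throughout this section.
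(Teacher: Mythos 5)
Your proposal is correct and is exactly the paper's proof: the paper also just applies Lemma~\ref{conv-smooth-prel} with $V = S_X$ and $A = B_X$. The extra details you supply (upgrading (ii) to $a \in [-1,1]$ via (iii) and $F(0)=0$, and checking $A = B_X$) are the routine verifications the paper leaves implicit.
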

\begin{proof}
We can apply Lemma \ref{conv-smooth-prel} with $V = S_X$ and $A = B_X$.
\end{proof}


 \section{Main  results}

\begin{theo} \label{stric-conv-image}
  Let $F: B_X \to B_Y$ be a bijective non-expansive map. If $Y$ is  strictly convex,  then $F$ is an  isometry.
\end{theo}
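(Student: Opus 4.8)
The plan is to reduce the statement directly to the preliminary machinery, since essentially all of the analytic work has already been packaged into Theorem \ref{wing-sc} and Corollary \ref{corollar111}. The first and only substantive observation is that the hypothesis of strict convexity of $Y$ activates the ``Moreover'' clause of Theorem \ref{wing-sc}: thus $F$ maps $S_X$ bijectively onto $S_Y$ (property (i)), is positively homogeneous along rays through the origin in the sense $F(ax)=aF(x)$ for $x\in S_X$ and $a\in(0,1)$ (property (ii)), and is odd on the sphere, $F(-x)=-F(x)$ for $x\in S_X$ (property (iii)). Since $F$ is assumed bijective, all the hypotheses of Corollary \ref{corollar111} are now in force.

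Having recorded (i), (ii), (iii), I would simply invoke Corollary \ref{corollar111}, which asserts precisely that a bijective $F$ satisfying these three properties is an isometry. This yields the desired conclusion, so the theorem is immediate once Theorem \ref{wing-sc} is in hand.

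For the sake of locating where the genuine content sits, I would keep in mind the mechanism behind Corollary \ref{corollar111}. Its proof invokes Lemma \ref{conv-smooth-prel} with $V=S_X$ and $A=B_X$; here properties (ii) and (iii), together with $F(0)=0$, must first be combined to upgrade homogeneity to the full range $a\in[-1,1]$ (for $a\in(-1,0)$ one writes $a=-b$ with $b\in(0,1)$ and uses $F(b(-x))=bF(-x)=-bF(x)$, since $-x\in S_X$). Lemma \ref{conv-smooth-prel} then fixes two points $y_1,y_2$, works inside the two-dimensional subspace $E=\spn\{y_1,y_2\}$, and exploits the supporting functionals $x^*$ at smooth points $x$ of $S_E$: via Lemma \ref{maj} one has $x^*(y_i)=(F(x))^*(F(y_i))$, and the sub-additivity estimate \eqref{x*} then squeezes $\|F(y_1)-F(y_2)\|$ between two equal bounds, forcing $\|F(y_1)-F(y_2)\|=\|y_1-y_2\|$ on all of $B_X$.

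Consequently I do not expect any real obstacle in this particular theorem; the only points requiring care have already been dispatched in the preliminaries, namely establishing the ``Moreover'' part of Theorem \ref{wing-sc} under strict convexity and verifying that the duality argument of Lemma \ref{conv-smooth-prel} can be run with $A=B_X$. The proof of Theorem \ref{stric-conv-image} itself is then a one-line combination of Theorem \ref{wing-sc} and Corollary \ref{corollar111}.
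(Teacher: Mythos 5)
Your proposal is correct and coincides with the paper's own proof: strict convexity of $Y$ gives properties (i), (ii), (iii) of Theorem \ref{wing-sc}, and Corollary \ref{corollar111} then yields the isometry. The additional commentary on the mechanism behind Lemma \ref{conv-smooth-prel} is accurate but not needed beyond this one-line reduction.
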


\begin{proof}
If $Y$ is strictly convex, then $F$ satisfies (i), (ii), and (iii) of Theorem \ref{wing-sc}, so Corollary \ref{corollar111} is applicable.
\end{proof}

Our next goal is to show that each non-expansive bijection from the unit ball of arbitrary Banach space to the unit ball of $\ell_1$ is an isometry. In the proof we will use the following three known results.

\begin{prop}[P.~Mankiewicz's  \cite{mank}] \label{Mankiewicz}
If $A\subset X$ and $B \subset Y$ are convex with non-empty interior, then every bijective isometry $F : A \to B$ can be extended to a bijective affine isometry $\tilde F : X \to Y $.
\end{prop}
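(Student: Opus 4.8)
Although this statement is classical (it is Mankiewicz's theorem), here is the line of proof I would follow. The whole problem reduces to showing that the isometry $F$ is \emph{affine}: an affine map defined on a set with non-empty interior has a unique affine extension to all of $X$, and the isometry property passes to that extension by continuity and density. Since $A$ and $B$ are convex, for any $x,y\in A$ the midpoint $\frac{x+y}{2}$ lies in $A$ and $\frac{F(x)+F(y)}{2}$ lies in $B$, so affineness is equivalent to the single midpoint identity $F\!\left(\frac{x+y}{2}\right)=\frac{F(x)+F(y)}{2}$. Thus the entire content is \emph{midpoint preservation}. I would stress at the outset why this is not cheap: one might hope to characterize the midpoint metrically via betweenness, $\|x-m\|+\|m-y\|=\|x-y\|$, but in a space that is not strictly convex this condition does not pin down the segment $[x,y]$, so the naive approach fails and one is forced to argue differently.

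The tool that targets midpoints directly is the Mazur--Ulam reflection argument in V\"ais\"al\"a's form. Fix $x,y\in A$, set $c=\frac{x+y}{2}$ and $c'=\frac{F(x)+F(y)}{2}$, and let $\sigma(w)=x+y-w$ and $\tau(z)=F(x)+F(y)-z$ be the point reflections about $c$ and $c'$; these are isometries interchanging the endpoints. The composite $g=F^{-1}\circ\tau\circ F\circ\sigma$ is an isometry fixing $x$ and $y$, and a direct computation gives $\|g(c)-c\|=2\|F(c)-c'\|$. The key algebraic fact is that for any isometry $h$ fixing $x$ and $y$, the map $h^{-1}\circ\sigma\circ h\circ\sigma$ again fixes $x$ and $y$ while \emph{doubling} the displacement of $c$, namely $\|h^{-1}\sigma h\sigma(c)-c\|=2\|h(c)-c\|$. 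Since every isometry $h$ fixing $x,y$ satisfies the a priori bound $\|h(c)-c\|\le 2\|c-x\|$, iterating the doubling operation starting from $g$ produces displacements $2^n\|F(c)-c'\|$ that must stay bounded, which forces $\|F(c)-c'\|=0$. This is midpoint preservation.

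The main obstacle is that $A$ is only a convex body, not the whole space, so the reflection $\sigma$ need not map $A$ into $A$ and the iterates need not stay in the domain of $F$; consequently $g$ and its iterated conjugates are only \emph{partially} defined, and the unbounded iteration underlying the displacement argument loses its meaning. Compounding this, since $X$ and $Y$ may be infinite-dimensional, invariance of domain is unavailable, so even the statement that $F$ carries interior points of $A$ to interior points of $B$ needs a genuine argument. I would handle these issues by localizing and exploiting convexity: first show, via the characterization that $c$ is interior iff $c+rB_X\subset A$ for some $r>0$, that $F$ maps $\operatorname{int}A$ onto $\operatorname{int}B$; then carry out the reflection argument for interior $c$ and nearby $x,y$, where convexity guarantees enough room for the reflected points and the required iterates to remain inside $A$ and $B$. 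Securing that room — or, equivalently, replacing the unbounded iteration by a limiting version adapted to the convex body, so that the doubling still forces the displacement to vanish — is the genuinely delicate step, and it is precisely here that non-emptiness of the interiors of $A$ and $B$ is indispensable.

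Once midpoint preservation holds on $\operatorname{int}A$, continuity of $F$ and convexity of $A$ propagate the identity to all pairs in $A$ and, by approximating boundary points with interior ones, to the closure; hence $F$ is affine. Translating so that a fixed interior point and its image become the origins, $F$ becomes the restriction to $A$ of a bounded linear isometry $T\colon X\to Y$. Finally, the surjectivity of $F$ onto the convex body $B$, together with the fact that $T$ agrees with $F$ on the set $A$ of non-empty interior, shows that $T$ is onto and isometric, so $\tilde F$ built from $T$ (and the translations) is the desired bijective affine isometric extension. I expect the algebra of the Mazur--Ulam step to be routine, and the control of the reflection argument on an infinite-dimensional convex body — including the transfer of interiors — to be where essentially all the work lies.
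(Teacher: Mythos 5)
First, a point of order: the paper does not prove this proposition at all — it is quoted as a known external result (Mankiewicz's theorem, \cite{mank}) and then used as a black box. So there is no internal proof to compare against, and your attempt must be judged on its own terms. Your outer frame is fine (reduction to midpoint preservation; unique affine extension from a set with nonempty interior; transfer of the isometry property and surjectivity to the extension). The genuine gap is exactly where you locate ``essentially all the work,'' and it is worse than delicate: the engine you chose, V\"ais\"al\"a's reflection--doubling argument, is the one variant of the Mazur--Ulam machinery that provably cannot be localized to a bounded convex body. To force $\|F(c)-c'\|=0$ the doubling map $h\mapsto h^{-1}\circ\sigma\circ h\circ\sigma$ must be iterated \emph{unboundedly} many times, and each composition requires the reflections to carry the current (already shrunken) domain back inside $A$ and $B$, which consumes a fixed amount of room comparable to the current displacement. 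On a bounded set only finitely many iterates, say $n_0$, are defined anywhere, and what survives is the estimate $\|F(c)-c'\|\le \mathrm{diam}(A)/2^{n_0}$ rather than $\|F(c)-c'\|=0$; no ``limiting version'' can restore the conclusion, because the obstruction is that the composites cease to exist, not that they are hard to estimate. Your proposed handling of the other difficulty is also circular: ``$c$ is interior iff $c+rB_X\subset A$ for some $r>0$'' is the \emph{definition} of interiority, not a metric characterization, so it cannot be transported through $F$; proving $F(\operatorname{int}A)=\operatorname{int}B$ in infinite dimensions (where invariance of domain is unavailable) requires an honest isometry-invariant characterization of interior points, and that is part of the real content of Mankiewicz's theorem.

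If you want a Mazur--Ulam-flavoured proof that has a chance of working, replace the reflection trick by the \emph{original} Mazur--Ulam construction via successive metric centers, which is local. For $u,v$ with $d=\|u-v\|$ and midpoint $m$, put $M_1=B(u,d/2)\cap B(v,d/2)$ and $M_{k+1}=\{w\in M_k: M_k\subset B(w,\mathrm{diam}(M_k)/2)\}$. Each $M_k$ is defined purely metrically from $u,v$, is symmetric about $m$, has diameter at most $2^{-k+1}d$, and lies inside $B(m,d/2)$; hence $\bigcap_k M_k=\{m\}$. If $B(u,d/2),B(v,d/2)\subset A$ and $B(F(u),d/2),B(F(v),d/2)\subset B$, then a bijective isometry $F$ carries each $M_k$ onto the corresponding set $M_k'$ built from $F(u),F(v)$, giving $F(m)=\frac{1}{2}(F(u)+F(v))$. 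This argument needs only room of size $d/2$ on \emph{both} sides, so it yields local midpoint preservation near interior points, hence local affinity, which one then propagates by connectedness of $\operatorname{int}A$. Note that even this route still requires the interior-preservation lemma (with quantitative room on the image side) as a separate, nontrivial ingredient. In short, the two steps your proposal defers are precisely the theorem's content, and the specific mechanism you propose for the first of them would fail outright rather than merely need polishing.
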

Taking into account that in the case of $A$, $B$ being the unit balls every isometry  maps 0 to 0, this result implies that every bijective isometry $F : B_X \to B_Y$  is the restriction of a linear isometry from $X$ onto $Y$.

\begin{prop} [Brower's invariance of domain principle \cite{Brouwer1912}]  \label{Brower}
Let $U$ be an open subset of $\R^n$ and $f : U \to \R^n$ be an injective continuous map, then $f(U)$ is open in $\R^n$.
\end{prop}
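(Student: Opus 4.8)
The statement is the classical invariance of domain, so the plan is to prove it by the homological method, reducing the openness of $f(U)$ to a computation of the homology of complements of embedded spheres and disks in the one-point compactification $S^n = \R^n \cup \{\infty\}$. First I would observe that it suffices to show that $f(U)$ contains a neighborhood of $f(x_0)$ for each $x_0 \in U$. Fix such an $x_0$, choose a closed ball $D \subset U$ with $x_0$ in its interior, and write $S = \partial D$, a topological $(n-1)$-sphere. Since $D$ is compact and $f$ is continuous and injective, $f|_D$ is a homeomorphism onto $f(D)$ and $f|_S$ a homeomorphism onto $f(S)$; thus $f(D)$ is an embedded closed $n$-disk and $f(S)$ an embedded $(n-1)$-sphere in $S^n$, and by injectivity $f(x_0) \notin f(S)$.

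The analytic heart is the following pair of facts, proved together by induction on the dimension of the embedded cell using a Mayer--Vietoris argument: (a) for any embedding $h$ of a $k$-disk into $S^n$ the complement $S^n \setminus h(D^k)$ is acyclic, i.e. $\widetilde{H}_*(S^n \setminus h(D^k)) = 0$; and (b) for any embedding $h$ of a $k$-sphere into $S^n$ with $k < n$ one has $\widetilde{H}_i(S^n \setminus h(S^k)) \cong \mathbb{Z}$ when $i = n-k-1$ and $0$ otherwise. Specializing (b) to the case $k = n-1$ gives $\widetilde{H}_0(S^n \setminus f(S)) \cong \mathbb{Z}$, which means precisely that $S^n \setminus f(S)$ has exactly two connected components.

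It remains to identify those components. We have the disjoint decomposition $S^n \setminus f(S) = f(\mathrm{int}\, D) \sqcup (S^n \setminus f(D))$. The set $S^n \setminus f(D)$ is connected by fact (a) (the complement of an embedded disk is even acyclic), and $f(\mathrm{int}\, D)$ is connected as the continuous image of a connected set; since there are exactly two components, these two sets must be them. Because $S^n$ is a manifold, the connected components of its open subset $S^n \setminus f(S)$ are open, so $f(\mathrm{int}\, D)$ is open in $S^n$, hence open in $\R^n$. As this open set contains $f(x_0)$ and is contained in $f(U)$, letting $x_0$ range over $U$ shows that $f(U)$ is open.

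The main obstacle is the inductive homology computation in facts (a) and (b): the Mayer--Vietoris step, applied after splitting an embedded disk into two halves, produces a candidate class that one must show cannot survive, and ruling it out requires a nested-bisection argument together with the continuity of homology under directed limits to force the class to vanish. Once this technical lemma is in hand the remainder of the argument is purely formal, and I would not expect the degree-theoretic alternative (computing a nonzero Brouwer degree of $f$ at $f(x_0)$) to be genuinely easier, since justifying that the local index of $f$ at $x_0$ equals $\pm 1$ runs into the same circularity: it tacitly uses that $f(D)$ is a neighborhood of $f(x_0)$, which is exactly what we are trying to establish.
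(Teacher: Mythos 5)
Your argument is correct, but there is nothing in the paper to compare it against: the paper states invariance of domain as a classical result, cites Brouwer's 1912 article \cite{Brouwer1912}, and uses it as a black box (in the proofs of Theorem \ref{conv-smooth} and Theorem \ref{Fin-dim}); no proof is given or intended there. What you have written is the standard homological proof (as in Hatcher's book): reduce to showing $f(\mathrm{int}\,D)$ is open for a closed ball $D \subset U$; invoke the generalized Jordan--Brouwer computations, namely that complements in $S^n$ of embedded disks are acyclic and complements of embedded $(n-1)$-spheres have exactly two components; then identify the two components of $S^n \setminus f(S)$ as $f(\mathrm{int}\,D)$ and $S^n \setminus f(D)$, so that $f(\mathrm{int}\,D)$ is open because components of an open subset of the locally connected space $S^n$ are open. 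This is sound, and you correctly isolate the technical heart in facts (a) and (b), whose Mayer--Vietoris/nested-bisection proof you would still need to supply to make the argument self-contained. Two small points deserve explicit mention in a full write-up: first, $S^n \setminus f(D)$ must be observed to be nonempty (e.g.\ $f(D)$ is homeomorphic to $D^n$, which cannot equal $S^n$ since $H_n(S^n) \cong \mathbb{Z} \neq 0$); second, since $S^n \setminus f(S)$ is open in a manifold, its connected components coincide with its path components, which is what lets you read off ``exactly two components'' from $\widetilde{H}_0(S^n \setminus f(S)) \cong \mathbb{Z}$.
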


\begin{prop}[Proposition 4 of \cite{KZ}] \label{prop-surject}
Let $X$ be a  finite-dimensional normed space and $V$ be a subset of $B_X$ with the following two properties:  $V$ is homeomorphic to $B_X$ and $V \supset S_X$. Then $V=B_X$.
\end{prop}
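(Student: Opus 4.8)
The plan is to argue purely topologically, using Brouwer's invariance of domain (Proposition \ref{Brower}) together with the connectedness of the ball and of the sphere. Set $n = \dim X$ and identify $X$ with $\R^n$ as a topological vector space, so that $B_X$ is a compact convex body, $U := \mathrm{int}\,B_X$ is the nonempty connected open unit ball, and $S_X = B_X \setminus U$. Fix a homeomorphism $\phi : B_X \to V$; since $B_X$ is compact, $V$ is compact, hence closed in $\R^n$. The whole point will be to show that $\phi$ carries the open ball onto an open set and the sphere onto the sphere: once $\phi(S_X) = S_X$ is known, the hypothesis $S_X \subset V$ will force $V$ to fill all of $B_X$.

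First I would apply Proposition \ref{Brower} to the continuous injection $\phi|_U : U \to \R^n$, obtaining that $W := \phi(U)$ is open in $\R^n$. Being an open set contained in $B_X$, it cannot meet $S_X$, so $W \subset U$. A short continuity argument—every $s \in S_X$ is a limit of points of $U$, and $\phi$ is continuous—gives $\phi(S_X) \subset \overline{W}$, whence $V = W \cup \phi(S_X) = \overline{W}$ (and $\phi(S_X) = \partial W$). Now the assumption $S_X \subset V$ enters: since $W \subset U$ is disjoint from $S_X$, each point of $S_X$ must lie in $\phi(S_X)$, that is, $S_X \subset \phi(S_X)$.

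The key step, and the main obstacle, is to upgrade this to $\phi(S_X) = S_X$, i.e.\ to show that $\phi$ respects the boundary sphere. Both $S_X$ and $\phi(S_X)$ are homeomorphic copies of $S^{n-1}$, with $S_X$ sitting inside $\phi(S_X)$. I would isolate the rigidity statement that any subset of $S^{n-1}$ homeomorphic to $S^{n-1}$ must equal $S^{n-1}$: transporting the inclusion $S_X \subset \phi(S_X)$ through $\phi^{-1}$ yields a continuous injection of $S^{n-1}$ into itself, whose image is open by invariance of domain (Proposition \ref{Brower}, applied in dimension $n-1$ inside local charts of the manifold $S^{n-1}$) and closed by compactness; since $S^{n-1}$ is connected for $n \ge 2$ (the cases $n \le 1$ being trivial), the image is everything. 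This gives $\phi^{-1}(S_X) = S_X$, equivalently $\phi(S_X) = S_X$.

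It remains to conclude. Since $\phi(S_X) = S_X$ is disjoint from $U$, we get $\overline{W} \cap U = V \cap U = (W \cup S_X) \cap U = W$, so $W$ is simultaneously open and closed in the connected set $U$. As $W \neq \emptyset$, this forces $W = U$, and therefore $V = \overline{W} = \overline{U} = B_X$. I expect essentially all the difficulty to be concentrated in the boundary-rigidity step above; the remaining manipulations are routine point-set topology once invariance of domain is in hand.
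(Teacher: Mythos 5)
Your proof is correct, but note that there is no in-paper proof to compare it with: the paper imports this statement verbatim as Proposition 4 of \cite{KZ}, so the only meaningful comparison is with the argument given there. That argument is of a different, homotopy-theoretic flavor: if some $p \in B_X \setminus V$ existed, it would be an interior point (since $S_X \subset V$), and the radial projection from $p$ would restrict to a retraction of $V$ onto $S_X$; as $V \cong B_X$ is contractible and a retract of a contractible space is contractible, this contradicts the non-contractibility of $S^{n-1}$ (the no-retraction/fixed-point circle of ideas). Your route instead runs entirely on invariance of domain plus point-set topology: the decomposition $V = W \cup \phi(S_X) = \overline{W}$ with $W = \phi(\mathrm{int}\,B_X)$ open, the rigidity step that a continuous injection of $S^{n-1}$ into itself must be surjective (image open by invariance of domain in charts, closed by compactness, everything by connectedness when $n \ge 2$, the cases $n \le 1$ being handled separately), and the clopen argument in the connected open ball. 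All of these steps check out, including the implicit use of compactness of $V$ in concluding $\overline{W} \subset V$. The one caveat is that the manifold version of invariance of domain you invoke on $S^{n-1}$ is a (routine, chart-by-chart) strengthening of Proposition \ref{Brower} as literally stated for open subsets of $\R^n$, so you use marginally more than the quoted statement. As for what each approach buys: the retraction proof of \cite{KZ} is shorter but imports the non-contractibility of the sphere, whereas yours stays within the toolkit this paper already quotes and yields the extra structural conclusion that every homeomorphism of $B_X$ onto $V$ carries $S_X$ onto $S_X$.
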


Now we give the promised theorem.

\begin{theo} \label{conv-smooth}
  Let $X$ be a Banach space, $F: B_X \to B_{\ell_1}$ be a bijective non-expansive map. Then $F$ is an  isometry.
\end{theo}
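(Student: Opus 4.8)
The plan is to reduce the statement to Corollary \ref{corollar111} by verifying conditions (i), (ii), (iii) of Theorem \ref{wing-sc} for the (non strictly convex) target $\ell_1$. First I would record what Theorem \ref{wing-sc} already gives. The extreme points of $B_{\ell_1}$ are exactly $\pm e_n$, $n\in\N$, so setting $x_n=F^{-1}(e_n)$, parts (3)--(5) say that each $x_n$ is an extreme point of $B_X$, that $F(-x_n)=-e_n$, and that $F(tx_n)=te_n$ for all $t\in[-1,1]$. Part (2) gives $F^{-1}(S_{\ell_1})\subset S_X$, equivalently $F(\mathrm{int}\,B_X)\subset\mathrm{int}\,B_{\ell_1}$, and part (1) gives $F(0)=0$. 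Together with surjectivity this already yields $S_{\ell_1}=F\!\left(F^{-1}(S_{\ell_1})\right)\subset F(S_X)$, so half of (i) is automatic.

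Next I would extract the only non-expansiveness input that survives the loss of strict convexity. Fix $x\in F^{-1}(S_{\ell_1})\subset S_X$, write $y=F(x)$, so $\|y\|_1=1$, and let $a\in[0,1]$. From $F(0)=0$ we get $\|F(ax)\|_1\le a$, from non-expansiveness $\|y-F(ax)\|_1\le 1-a$, and since $\|y\|_1=1$ the triangle inequality forces both to be equalities. In $\ell_1$ the identity $\|F(ax)\|_1+\|y-F(ax)\|_1=\|y\|_1$ holds coordinatewise, so $F(ax)$ lies in the coordinate \emph{box} spanned by $0$ and $y$ and has norm exactly $a$. When $y$ is an extreme point this box degenerates to the segment $[0,y]$ and one recovers homogeneity; for a general $y$ the box is genuinely fat, which is precisely why the $\ell_1$ case requires more than the strictly convex one.

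The crux is therefore to prove that $F$ wastes no sphere point, i.e. $F(S_X)=S_{\ell_1}$, equivalently $F^{-1}(\mathrm{int}\,B_{\ell_1})\subset\mathrm{int}\,B_X$. I would argue by contradiction through finite-dimensional sections. Assuming some $y_0\in\mathrm{int}\,B_{\ell_1}$ had its preimage on $S_X$, I would approximate $y_0$ by a finitely supported point and work inside the cross-polytope $\Gamma_N=\conv\{\pm e_1,\dots,\pm e_N\}=B_{\ell_1^N}$, whose relative boundary $S_{\ell_1^N}$ lies in $S_{\ell_1}$ and hence (by part (2)) has preimage in $S_X$. Using the coordinate-box description to build a continuous finite-dimensional selection, together with the homogeneity along the directions $x_1,\dots,x_N$, I would produce a continuous injective map on a ball of $\ell_1^N$; Brouwer's invariance of domain (Proposition \ref{Brower}) makes its image open, and Proposition \ref{prop-surject}, applied with the sphere $S_{\ell_1^N}\subset\Gamma_N$ already covered, forces the image to be all of $\Gamma_N$. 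This contradicts $y_0$ being omitted from the interior image, giving $F(S_X)=S_{\ell_1}$ and, by complements, $F(\mathrm{int}\,B_X)=\mathrm{int}\,B_{\ell_1}$.

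With sphere-preservation in hand, the coordinate-box equalities of the second paragraph now upgrade to genuine homogeneity $F(ax)=aF(x)$ and oddness $F(-x)=-F(x)$ for every $x\in S_X$: surjectivity and injectivity on spheres pin the fat box down to its diagonal, and on each finite-dimensional piece Mankiewicz's theorem (Proposition \ref{Mankiewicz}) linearises the resulting isometry, forcing the selection onto the segment $[0,y]$. This establishes (i), (ii), (iii) of Theorem \ref{wing-sc}, whence Corollary \ref{corollar111} finishes the proof. I expect the main obstacle to be exactly the finite-dimensional surjectivity step: because $\ell_1$ is not strictly convex, the extreme-point preimages control $F$ only along the coordinate skeleton, and because $X$ may be infinite-dimensional one cannot invoke invariance of domain globally. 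Both difficulties have to be funneled through the sections $\Gamma_N$, and the delicate point will be ensuring that the map built on $\ell_1^N$ (essentially a branch of $F^{-1}$ restricted to $\Gamma_N$) is continuous, so that the topological tools genuinely apply.
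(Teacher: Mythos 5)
Your toolkit is essentially the paper's own (sections spanned by the preimages $x_n=F^{-1}(e_n)$, invariance of domain, Proposition \ref{prop-surject}, Mankiewicz), your preliminary observations --- including the coordinate-box equality forced by the $\ell_1$ norm --- are correct, and the reduction to Corollary \ref{corollar111} is legitimate. But two of your central steps have genuine gaps. The worst is in the surjectivity argument, where you invoke Proposition \ref{prop-surject} ``with the sphere $S_{\ell_1^N}\subset\Gamma_N$ already covered''. It is not covered: part (2) of Theorem \ref{wing-sc} gives only $F^{-1}(S_{\ell_1^N})\subset S_X$, and this does not place those preimages inside the $N$-dimensional ball $U_N=B_X\cap\spn\{x_1,\dots,x_N\}$ that serves as the domain of your finite-dimensional map; so the hypothesis ``image $\supset S_{\ell_1^N}$'' of Proposition \ref{prop-surject} is exactly what remains to be proved, not a given. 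The paper devotes a separate argument to it (inclusion \eqref{eq2}): for $y\in\partial V_N\setminus F(U_N)$ and $x=F^{-1}(y)$, an equality-forcing $\ell_1$ computation shows that the whole curve $t\mapsto F(tx)$, $t\in[0,1]$, stays in $V_N$; since $F(U_N)$ contains a relative neighbourhood of $0$ in $V_N$ (invariance of domain), the curve must meet $F(U_N)$, contradicting injectivity because $tx\notin U_N$. Note also that invariance of domain can be applied only after one knows $F(U_N)\subset V_N$ (the paper's \eqref{eq1}, again a separate equality-forcing chain of inequalities): Proposition \ref{Brower} concerns maps from $\R^N$ into $\R^N$, whereas your ``branch of $F^{-1}$'' takes values in the possibly infinite-dimensional $X$. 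This, in addition to the continuity problem you yourself flag, is why the paper works with $F$ on $U_N$ rather than with $F^{-1}$ on $\Gamma_N$. Finally, underneath everything lies the identity $\bigl\|\sum_{i\le N}a_ix_i\bigr\|=\sum_{i\le N}|a_i|$ (the paper's \eqref{eq-norm-l1}, proved by induction using non-expansiveness), which you never establish and without which there is no isometric copy of $B_{\ell_1^N}$ inside $B_X$ on which to run the argument.

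The concluding upgrade from the ``fat box'' to homogeneity also does not go through as stated. You write that ``Mankiewicz's theorem linearises the resulting isometry'', but at that stage there is no isometry to linearise: what you have on a section is a non-expansive bijection $F|_{U_N}\colon U_N\to V_N$, and Proposition \ref{Mankiewicz} requires a bijective isometry as input. The missing ingredient is plasticity of compact metric spaces, \cite[Theorem 1.1]{NaiPioWing}: once $F(U_N)=V_N$ is known and once \eqref{eq-norm-l1} identifies $U_N$ and $V_N$ as isometric compacts, that theorem converts the non-expansive bijection $F|_{U_N}$ into an isometry, and only then can Mankiewicz make it linear. Moreover, your closing contradiction does not actually close: the offending point $y_0\in\mathrm{int}\,B_{\ell_1}$ need not be finitely supported, so $y_0\notin\Gamma_N$ for every $N$, and knowing $F(U_N)=\Gamma_N$ for all $N$ says nothing directly about $y_0$. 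One must pass to the limit as the paper does at the end: the limit form of \eqref{eq-norm-l1} makes $T\colon\sum_n c_ne_n\mapsto\sum_n c_nx_n$ a well-defined continuous map on $B_{\ell_1}$, the continuous maps $F\circ T$ and the identity agree on the dense set of finitely supported vectors and hence everywhere, so $F^{-1}=T$; only this pins down $F^{-1}(y_0)$ and yields the contradiction (and, in fact, immediately gives that $F$ is an isometry, making the detour through (i)--(iii) unnecessary).
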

\begin{proof} Denote  $e_n = (\delta_{i,n})_{i \in \N}$, $n = 1,2, \ldots$ the elements of the canonical basis of $\ell_1$ (here, as usual, $\delta_{i,n} = 0$ for $n \neq i$ and $\delta_{n,n} = 1$). It is well-known and easy to check that $ext(B_{\ell_1}) = \{ \pm e_n,  i = 1,2,...\}$.

Denote $g_n = F^{-1}e_n$. According to item (5) of Theorem \ref{wing-sc} each of $g_n$ is an extreme point of $B_X$.

One more notation: for every  $N \in \N$ and $X_N = \spn\{g_k\}_{k \le N}$ denote $U_N$ and $\partial U_N$ the unit ball and the unit sphere of $X_N$ respectively and analogously for $Y_N = \spn\{e_k\}_{k \le N}$ denote $V_N$ and $\partial V_N$ the unit ball and the unit sphere of $Y_N$ respectively.

\textbf{Claim}. \textit{For every  $N \in \N$ and every collection $\{a_k\}_{k \le N}$ of reals with $\|\sum_{n \le N}a_n g_n\| \le 1$}
$$
F\left(\sum_{n \le N}a_n g_n\right) = \sum_{n \le N}a_n e_n.
$$

{\it Proof of the Claim}. We will use induction on $N$. For $N = 1$, the Claim follows from items (4) and (6) of Theorem \ref{wing-sc}. Now assume the validity of  the Claim for $N-1$, and let us prove it for $N$.
At first, for every $x = \sum_{i=1}^{N}{\alpha_i  g_i}$ we will show that
\begin{equation} \label{eq-norm-l1}
\|x\| = \sum_{i=1}^{N}{|\alpha_i|}.
\end{equation}
  Note that, due to the positive homogeneity of the norm, it is sufficient to consider  $x = \sum_{i=1}^{N}{\alpha_i  g_i}$, with  $\sum_{i=1}^{N}{|\alpha_i|}\leq 1$. In such a case $x \in U_N$. So
$$
\left\|\sum_{i=1}^{N-1}{\alpha_i  g_i}\right\| \leq \sum_{i=1}^{N-1}{\|\alpha_i  g_i\|} = \sum_{i=1}^{N-1}{|\alpha_i|}\leq \sum_{i=1}^{N}{|\alpha_i|}\leq 1,
$$
and $\sum_{i=1}^{N-1}{\alpha_i  g_i }\in U_N$. On one hand,
$$
\|x\| = \left\|\sum_{i=1}^{N}{\alpha_i  g_i}\right\| \leq \sum_{i=1}^{N}{|\alpha_i|}.
$$
 On the other hand, by the induction hypothesis $F(\sum_{i=1}^{N-1}{\alpha_i  g_i}) =  \sum_{i=1}^{N-1}{\alpha_i  e_i}$ and by items (4) and (6) of Theorem \ref{wing-sc} $F(-\alpha_N  g_N) = -\alpha_N  e_N$. Consequently,
\bea
\|x\| &=& \left\|\sum_{i=1}^{N-1}{\alpha_i  g_i}+ \alpha_N  g_N\right\|
= \rho\left(\sum_{i=1}^{N-1}{\alpha_i  g_i}, (-\alpha_N  g_N)\right) \\
&\geq&  \rho\left(F(\sum_{i=1}^{N-1}{\alpha_i  g_i}), F(-\alpha_N  g_N)\right)
= \left\| \sum_{i=1}^{N-1}{\alpha_i  e_i}+ \alpha_N  e_N\right\| = \sum_{i=1}^{N}{|\alpha_i|},
\eea
and \eqref{eq-norm-l1} is demonstrated.
That means that
$$
U_N = \left\{\sum_{n \le N}a_n g_n: \sum_{n  \le N} |a_n|  \le 1 \right\},  \partial U_N = \left\{\sum_{n  \le N}a_n g_n: \sum_{n  \le N} |a_n|  = 1 \right\}.
$$

The remaining part of the proof of the Claim, and of the whole theorem repeats almost literally the corresponding part of the proof of \cite[Thorem 1]{KZ}, so we present it here only for the reader's convenience. \\
 Let us show that
\begin{equation} \label{eq1}
F(U_N) \subset V_N.
\end{equation}
To this end, consider $x \in U_N$. If $x$ is of the form $\alpha g_N$ the statement follows from Theorem \ref{wing-sc}. So we must consider $x = \sum_{i=1}^{N}{\alpha_i  g_i}$, $\sum_{i=1}^{N}{|\alpha_i|}\leq 1$ with $\sum_{i=1}^{N - 1}|\alpha_i| \neq 0$. Denote the expansion of $F(x)$ by $F(x) = \sum_{i=1}^{\infty}{y_i  e_i}$. For the element
$$
x_1 = \frac{ \sum_{i=1}^{N - 1}\alpha_i  g_i}{ \sum_{i=1}^{N - 1}|\alpha_i| }
$$
by the induction hypothesis
$$
F(x_1) = \frac{ \sum_{i=1}^{N - 1}\alpha_i  e_i}{ \sum_{i=1}^{N - 1}|\alpha_i|}.
$$

So we may write the following chain of inequalities:

\begin{align*}
2 &=  \left\|F(x_1) - \frac{\alpha_N}{|\alpha_N|} e_N\right\|  \le \left\|F(x_1) - \sum_{i=1}^{N}{y_i  e_i}\right\| + \left\|\sum_{i=1}^{N}{y_i  e_i} - \frac{\alpha_N}{|\alpha_N|} e_N\right\|  \\
& = \left\|F(x_1) - F(x)\right\| + \left\|F(x) - \frac{\alpha_N}{|\alpha_N|} e_N\right\|  - 2 \sum_{i=N+1}^{\infty}|y_i| \\
&\le
 \left\|F(x_1) - F(x)\right\| + \left\|F(x) - F\left(\frac{\alpha_N}{|\alpha_N|}g_N\right)\right\|    \le \left\|x_1 - x \right\| + \left\|x - \frac{\alpha_N}{|\alpha_N|}g_N\right\| \\
 &= \sum_{j=1}^{N - 1}\left| \alpha_j -  \frac{\alpha_j}{ \sum_{i=1}^{N - 1}|\alpha_i|} \right| + |\alpha_N| +   \sum_{j=1}^{N - 1}| \alpha_j| + \left|\alpha_N - \frac{\alpha_N}{|\alpha_N|}\right|\\
 &= \sum_{j=1}^{N - 1}| \alpha_j| \left(1 + \left| 1 -  \frac{1}{ \sum_{i=1}^{N - 1}|\alpha_i|} \right| \right) +  |\alpha_N|\left(1 + \left| 1 -  \frac{1}{|\alpha_N|} \right| \right) = 2.
\end{align*}
This means that all the inequalities in between are in fact equalities, so in particular $\sum_{i=N+1}^{\infty}|y_i| = 0$, i.e. $F(x) = \sum_{i=1}^{N}{y_i  e_i} \in V_N$ and \eqref{eq1} is proved.

Now, let us demonstrate that
\begin{equation} \label{eq2}
F(U_N) \supset \partial V_N.
\end{equation}

Assume on the contrary that there is a $y \in \partial V_N \setminus F(U_N)$. Denote $x = F^{-1}(y)$. Then,  $\|x\| = 1$ (by (2) of Theorem \ref{wing-sc}) and $x \notin U_N$. For every $ t \in [0, 1]$ consider $F(tx)$. Let $F(tx) = \sum_{n \in \N} b_n e_n$ be the corresponding expansion. Then,
\begin{align*}
1 &= \|0 - tx\| + \|tx - x\| \ge \|0 - F(tx)\| + \|F(tx) - y\|  \\
& = 2\sum_{n > N} |b_n| + \left \| \sum_{n \le N}b_n e_n \right\| + \left\|y - \sum_{n \le N}b_n e_n \right\| \ge 2\sum_{n > N} |b_n| + 1,
\end{align*}
so $\sum_{n > N} |b_n| = 0$. This means that $F(tx) \in V_N$ for every $ t \in [0, 1]$. On the other hand,  $F(U_N)$ contains a relative neighborhood of 0 in $V_N$ (here we use that $F(0) = 0$ and Proposition \ref{Brower}), so the continuous curve $\{F(tx) : t \in [0, 1]\}$   in  $V_N$ which connects 0 and $y$ has a non-trivial intersection with  $F(U_N)$.  This implies that there is a $t \in (0, 1)$ such that $F(tx) \in F(U_N)$. Since $tx \notin U_N$ this contradicts the injectivity of $F$. Inclusion \eqref{eq2} is proved.

 Now, inclusions  \eqref{eq1} and \eqref{eq2} together with Proposition \ref{prop-surject} imply $F(U_N) = V_N$. Observe that by \eqref{eq-norm-l1} $U_N$ is isometric to $V_N$ and, by finite dimensionality, $U_N$ and $V_N$ are compacts. So, $U_N$ and $V_N$ can be considered as two copies of one the same compact metric space, and  Theorem 1.1 of  \cite{NaiPioWing} implies that every bijective non-expansive map from $U_N$ onto $V_N$  is an isometry. In particular, $F$ maps $U_N$ onto $V_N$ isometrically. Finally, the application of Proposition \ref{Mankiewicz} gives us that the restriction of $F$ to $U_N$ extends to a linear map from $X_N$ to $Y_N$, {\it which completes the proof of the Claim}.

Now let us complete the proof of the theorem. At first, passing in \eqref{eq-norm-l1} to limit as $N \to \infty$ we get
 $$
\|z\| = \sum_{i=1}^{\infty}{|z_i|}
$$
 for every $z = \sum_{n =1}^\infty z_n g_n$ with $\sum_{n =1}^\infty |z_n| < \infty$. The continuity of $F$ and the claim imply that for every $x = \sum_{n =1}^\infty x_n e_n \in B_{\ell_1}$
 $$
F\left(\sum_{n =1}^\infty x_n g_n\right) = \sum_{n =1}^\infty x_n e_n, \quad\text{so} \quad F^{-1}\left(\sum_{n =1}^\infty x_n e_n\right) = \sum_{n =1}^\infty x_n g_n.
$$
Consequently, for every  $x,y \in B_{\ell_1}$, $x =\sum_{n =1}^\infty x_n e_n$ and $y =\sum_{n =1}^\infty y_n e_n $ the following equalities  hold true:
\begin{align*}
\|x-y\| &=
\left\| \sum_{n =1}^\infty (x_n - y_n) e_n\right\| = \sum_{n =1}^\infty |(x_n - y_n)| =\left\| \sum_{n =1}^\infty (x_n - y_n) g_n\right\| \\
&=\left\| \sum_{n =1}^\infty x_n g_n - \sum_{n =1}^\infty y_n g_n\right\| = \left\| F^{-1}(x) - F^{-1}(y) \right\|.
\end{align*}
So, $F^{-1}$ is an isometry, consequently the same is true for $F$.
\end{proof}

Our next (and the last) goal is to demonstrate that each non-expansive bijection between unit balls of  two different finite dimensional Banach spaces is an isometry.  Below we recall the definitions and well-known properties of total and norming subsets of dual spaces that  we will need further.

A subset $V \subset S_{X^*}$  is called \emph{total} if for every $x\neq 0$ there exists $f\in V$ such that $f(x)\neq 0$. $V $  is called norming if $\sup|f(x)|_{f\in V}=\|x\|$ for all $x\in X$. We will use the following easy exercise.

\begin{lem}[\cite{Kad}, Exercise 9, p. 538] \label{norm}
Let $A\subset S_X$  be dense in $S_X$, for every $a\in A$ let  $f_a$ be  a supporting functional at $a$. Then $ V = \{f_a: a\in A\}$ is norming (and consequently total).
\end{lem}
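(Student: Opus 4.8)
The plan is to prove the two inequalities that together give the norming identity $\sup_{a \in A} |f_a(x)| = \|x\|$ for every $x \in X$, and then to read off totality as an immediate consequence. The upper bound is free: each $f_a$ is a supporting functional and hence has norm one, so $|f_a(x)| \le \|f_a\|\,\|x\| = \|x\|$, giving $\sup_{a \in A} |f_a(x)| \le \|x\|$. Since both sides are positively homogeneous in $x$, and the case $x=0$ is trivial, it suffices to establish the reverse inequality for $x \in S_X$ only.

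So I would fix $x \in S_X$ and use the density of $A$ in $S_X$ to choose a sequence $a_n \in A$ with $\|x - a_n\| \to 0$. The one real computation exploits the defining property $f_{a_n}(a_n) = \|a_n\| = 1$ together with the operator-norm bound $\|f_{a_n}\| = 1$: writing
$$
f_{a_n}(x) = f_{a_n}(a_n) + f_{a_n}(x - a_n) \ge 1 - \|f_{a_n}\|\,\|x - a_n\| = 1 - \|x - a_n\|,
$$
I get $\liminf_n f_{a_n}(x) \ge 1$. Combined with the upper bound $f_{a_n}(x) \le \|x\| = 1$, this forces $f_{a_n}(x) \to 1$, so $\sup_{a \in A} |f_a(x)| \ge 1 = \|x\|$. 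This closes the reverse inequality on $S_X$, and by homogeneity it holds for all $x$, proving that $V$ is norming.

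Finally, totality is automatic: for any $x \neq 0$ the norming property yields $\sup_{f \in V}|f(x)| = \|x\| > 0$, so some $f \in V$ must satisfy $f(x) \neq 0$, which is exactly the definition of a total set.

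As for the difficulty, there is essentially no substantial obstacle here; the statement is a standard exercise. The only step requiring any care is the lower bound, and its whole content is the pairing of two elementary facts, the density of $A$ and the Lipschitz estimate $|f_a(x) - f_a(a)| = |f_a(x-a)| \le \|x - a\|$ coming from $f_a$ being a norm-one linear functional. Everything else reduces to the trivial upper bound and a homogeneity rescaling.
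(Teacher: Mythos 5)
Your proof is correct. Note that the paper itself gives no proof of this lemma---it is cited as Exercise 9, p.\ 538 of the referenced textbook \cite{Kad}---and your argument (upper bound from $\|f_a\|=1$, lower bound by evaluating $f_{a_n}$ at $x$ via the support identity $f_{a_n}(a_n)=1$ and the Lipschitz estimate $|f_{a_n}(x-a_n)|\le\|x-a_n\|$ along a sequence $a_n\to x$, then totality as an immediate consequence of norming) is precisely the standard solution that exercise intends.
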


The following known fact is an easy consequence of the bipolar theorem.
\begin{lem} \label{refl}
Let $X$ be a reflexive space. Then $V\subset S_{X^*}$ is norming if and only if $\overline{\aconv}(V) = {B_{X^*}}$.
\end{lem}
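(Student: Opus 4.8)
The plan is to deduce both implications from the bipolar theorem applied to the dual pairing $\langle X, X^*\rangle$. For a set $A \subset X^*$ write its absolute polar as $A^\circ = \{x \in X : |f(x)| \le 1 \text{ for all } f \in A\}$ and its bipolar as $A^{\circ\circ} = \{g \in X^* : |g(x)| \le 1 \text{ for all } x \in A^\circ\}$. The bipolar theorem then says $V^{\circ\circ} = \overline{\aconv}(V)$, where a priori the closure is taken in the weak-$*$ topology $\sigma(X^*, X)$. This is the only place where reflexivity enters: since $X$ is reflexive, the weak-$*$ and weak topologies on $X^*$ coincide, and for convex sets the weak closure equals the norm closure by Mazur's theorem; hence the weak-$*$ closure of the absolutely convex set $\aconv(V)$ is exactly its norm closure, so the object $\overline{\aconv}(V)$ appearing in the statement is unambiguous and equals $V^{\circ\circ}$.

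First I would translate the norming condition into a statement about the polar. Directly from the definitions, $V^\circ = \{x \in X : \sup_{f \in V} |f(x)| \le 1\}$, so $V$ is norming precisely when $V^\circ = B_X$. Assuming $V$ is norming, I then compute the bipolar: $V^{\circ\circ} = (B_X)^\circ = \{g \in X^* : |g(x)| \le 1 \text{ for all } x \in B_X\} = B_{X^*}$. Combining this with the bipolar theorem yields $\overline{\aconv}(V) = V^{\circ\circ} = B_{X^*}$, which is one implication.

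For the converse, assume $\overline{\aconv}(V) = B_{X^*}$ and fix $x \in X$. The map $f \mapsto |f(x)|$ is convex, positively homogeneous and norm-continuous, so its supremum over $\aconv(V)$ coincides with its supremum over $V$ (any $\sum_i \lambda_i f_i$ with $f_i \in V$ and $\sum_i |\lambda_i| \le 1$ satisfies $|\sum_i \lambda_i f_i(x)| \le \sup_{f \in V} |f(x)|$), and passing to the norm closure does not increase the supremum of a norm-continuous function. Therefore $\sup_{f \in V} |f(x)| = \sup_{f \in \overline{\aconv}(V)} |f(x)| = \sup_{f \in B_{X^*}} |f(x)| = \|x\|$, so $V$ is norming.

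The argument is genuinely short, and I expect the main obstacle to be a bookkeeping one rather than a conceptual one: the single point requiring care — and precisely the reason reflexivity is hypothesized — is the identification, carried out in the first paragraph, of the weak-$*$ closure furnished by the bipolar theorem with the norm closure $\overline{\aconv}(V)$ written in the statement. Once that identification is in place, everything else is a direct unwinding of the definitions of polar and of ``norming''.
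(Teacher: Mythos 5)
Your proof is correct and follows exactly the route the paper intends: the paper gives no written proof, stating only that the lemma ``is an easy consequence of the bipolar theorem,'' and your argument is precisely that deduction, with the right identification of where reflexivity is needed (to turn the weak-$*$ closure from the bipolar theorem into the norm closure, via Mazur) and a converse direction that correctly avoids needing reflexivity at all.
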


Now we can demonstrate the promised result.

\begin{theo} \label{Fin-dim}
 Let $X$, $Y$ be Banach spaces, $Y$ be finite-dimensional, $F:B_X \to B_Y$ be a bijective non-expansive map. Then $F$ is an isometry.
 \end{theo}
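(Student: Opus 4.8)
The plan is to run Brouwer's invariance of domain (Proposition \ref{Brower}) twice to force $X$ to be finite-dimensional of the right dimension, then to pin down the sphere, and only at the end to feed the extreme points into the machinery of Lemmas \ref{conv-smooth-prel}, \ref{norm} and \ref{refl}. First I would show $\dim X \le \dim Y =: n$. If not, $X$ contains an $(n+1)$-dimensional subspace $Z$, and $F$ restricted to the open ball $B_Z\setminus S_Z$ (which is an open subset of $Z\cong\R^{n+1}$) is a continuous injection into $B_Y\subseteq Y\cong\R^n$. Embedding $Y$ as a hyperplane $\R^n\times\{0\}\subset\R^{n+1}$ gives a continuous injection of an open subset of $\R^{n+1}$ into $\R^{n+1}$ whose image lies in a hyperplane; by Proposition \ref{Brower} that image must be open in $\R^{n+1}$, which is absurd. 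Hence $X$ is finite-dimensional, $B_X$ is compact, and $F$ is a continuous bijection from a compact space onto the Hausdorff space $B_Y$, so $F$ is a homeomorphism and $F^{-1}$ is continuous. Running the same argument for the continuous injection $F^{-1}$ yields $\dim Y\le\dim X$, so $\dim X=\dim Y=n$.

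Next I would establish property (i) of Theorem \ref{wing-sc}, i.e.\ $F(S_X)=S_Y$. By item (2) of Theorem \ref{wing-sc} we have $F(B_X\setminus S_X)\subseteq B_Y\setminus S_Y$, which already gives $S_Y\subseteq F(S_X)$. For the reverse inclusion, note that $F^{-1}$ is a continuous injection, so by Proposition \ref{Brower} the image $F^{-1}(B_Y\setminus S_Y)$ is open in $X\cong\R^n$ and contained in $B_X$; an open subset of $\R^n$ cannot contain a point of $S_X$ while remaining inside $B_X$, so $F(S_X)\cap(B_Y\setminus S_Y)=\emptyset$, that is $F(S_X)\subseteq S_Y$. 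Thus $F$ maps $S_X$ bijectively onto $S_Y$.

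With (i) secured I would invoke the extreme points. For each $e\in\ex(B_Y)$ set $g_e=F^{-1}(e)$; by item (4) each $g_e\in\ex(B_X)$, and items (1), (3), (5) of Theorem \ref{wing-sc} give $F(a g_e)=a F(g_e)$ for every $a\in[-1,1]$. Applying Lemma \ref{conv-smooth-prel} with $V=\{g_e: e\in\ex(B_Y)\}$ shows that $F$ is an isometry on the balanced star $A=\{t g_e: e\in\ex(B_Y),\ t\in[-1,1]\}$, and $F(A)$ is the corresponding star in $Y$, whose convex hull is all of $B_Y$ since in finite dimensions $B_Y=\conv(\ex(B_Y))$.

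The remaining, and decidedly hardest, step is to promote the isometry on $A$ to an isometry on all of $B_X$ — equivalently to verify properties (ii) and (iii) of Theorem \ref{wing-sc} for \emph{every} $v\in S_X$, not merely for the extreme directions $g_e$, after which Corollary \ref{corollar111} closes the argument. The trouble is that non-extreme points of $S_Y$ have none of the rigidity of items (3)--(5): non-expansiveness only forces $\|F(av)\|=a$ and $\|F(v)-F(av)\|=1-a$ for $v\in S_X$, and when $Y$ is not strictly convex this does not by itself yield $F(av)=aF(v)$. To bridge this gap I would use the norming/reflexivity apparatus: since $X\cong\R^n$ is reflexive and $\ex(B_Y)$ norms $Y^*$, I would transfer this property through the isometry $F|_A$ to prove that $\{g_e\}$ is norming for $X^*$, so that Lemmas \ref{norm} and \ref{refl} give $\overline{\aconv}\{g_e\}=B_X$ and hence $\conv(A)=B_X$; the extreme-point correspondence $e\mapsto g_e$ then extends to a linear isometry between $Y$ and $X$, making $B_X$ and $B_Y$ two copies of one compact metric space, whence the non-expansive bijection $F$ between them is an isometry by Theorem~1.1 of \cite{NaiPioWing} (alternatively by Proposition \ref{Mankiewicz} applied to the isometric identification). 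I expect the genuine obstacle to sit exactly at this transfer — showing that the preimages of the extreme points of $B_Y$ are norming, equivalently that $F$ carries extreme points to extreme points — since this is precisely the point at which strict convexity was tacitly used in the easier cases.
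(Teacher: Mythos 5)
Your opening steps are sound and essentially match the paper's: invariance of domain gives $\dim X=\dim Y$, $F$ is a homeomorphism with $F(S_X)=S_Y$, and Lemma \ref{conv-smooth-prel} applied to $V=F^{-1}(\ex B_Y)$ does give an isometry on the star $A$. But the proposal stops exactly where the theorem actually lives. Your final step consists of two claims, neither proved: (a) that $\{g_e\}=F^{-1}(\ex B_Y)$ is norming for $X^*$ --- equivalently, in essence, that $F$ carries $\ex B_X$ \emph{onto} $\ex B_Y$, whereas Theorem \ref{wing-sc}(4) only gives the inclusion $F^{-1}(\ex B_Y)\subset \ex B_X$; and (b) that the correspondence $e\mapsto g_e$ then ``extends to a linear isometry.'' You concede (a) is an obstacle you cannot clear, and (b) is no easier: you cannot invoke Proposition \ref{Mankiewicz}, because $A$ is a non-convex star, and even if (a) gave $\conv(A)=B_X$, the map $F$ is known to be an isometry only on $A$, not on $\conv(A)$ --- knowing the latter \emph{is} the theorem. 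Extending an isometry of a sphere-like, non-convex set to a linear map is precisely the Tingley-type difficulty that the introduction describes as open; it does not follow from Lemmas \ref{norm} and \ref{refl}, which concern families of \emph{supporting functionals} that are norming by construction, not sets whose norming character is itself in question.

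The paper closes the gap by a genuinely different mechanism that avoids (a) and (b) altogether: it never shows that $F^{-1}(\ex B_Y)$ is norming. By Rockafellar's theorem the set $G$ of points $x\in S_X$ at which both $x$ and $F(x)$ are smooth is dense in $S_X$; then $\{x^*: x\in G\}$ and $\{(F(x))^*: x\in G\}$ are norming \emph{automatically} (Lemma \ref{norm}), so by Lemma \ref{refl} their closed absolutely convex hulls are the dual balls. The homogeneity you established on $K=F^{-1}(\ex B_Y)$ is used not to make $K$ large, but to apply Lemma \ref{maj} and obtain $(F(x))^*(F(z))=x^*(z)$ for all $x\in G$, $z\in K$. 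Since functionals on $Y$ are determined by their values on $F(K)=\ex B_Y$ (as $B_Y=\conv(\ex B_Y)$ in finite dimensions), this identity shows the assignment $x^*\mapsto (F(x))^*$ is well defined and respects linear combinations, hence extends to a linear bijective isometry $\tilde H: X^*\to Y^*$. Therefore $X$ and $Y$ are isometric, $B_X$ and $B_Y$ are two copies of one compact metric space, and \cite[Theorem 1.1]{NaiPioWing} finishes the proof. In short, the missing idea is this duality trick --- testing functionals against the extreme-point preimages rather than requiring those preimages to be norming --- and without it your final paragraph is a restatement of the problem rather than a proof of it.
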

 \begin{proof}
Take an arbitrary finite-dimensional subspace $Z \subset X$. Then the restriction of $F$ to $B_Z$  is a bijective and continuous map between two compact sets $B_Z$ and $F(B_Z)$, so $B_Z$ and $F(B_Z)$ are homeomorphic. Thus,  Brower's invariance of domain principle (Proposition \ref{Brower})  implies that $\dim Z \le \dim Y$. By arbitrariness of  $Z \subset X$ this implies that $\dim X \le \dim Y$. Consequently, $F$ being bijective and continuous map between compact sets $B_X$ and $B_Y$, is a homeomorphism. Another application of  Proposition \ref{Brower} says that $\dim X = \dim Y$, $F$ maps interior points in  interior points, and $F(S_X) = S_Y$.

Let $G$ be the set of all $x \in S_X$ such that norm is differentiable both at $x$ and $F(x)$.  According to \cite[Theorem 25.5]{Rock},  the complement to the set of differentiability points of the norm is meager. Consequently, $G$ being an intersection of two comeager sets,   is dense in $S_X$.
Recall that $F$ is a homeomorphism, so $F(G)$ is dense in $S_Y$. Thus, Lemma \ref{norm} ensures that   $A:= \{x^*: x\in G\}$ and  $B:= \{F(x)^*:x\in G\} = \{y^* :y\in F(G)\}$ are norming subsets of $X^*$ and $Y^*$ respectively, and consequently by Lemma \ref{refl}
\begin{equation} \label{eq:AB}
\overline{\aconv}(A) = B_{X^*},  \overline{\aconv}(B) = B_{Y^*}
\end{equation}

    Denote $K = F^{-1}(\ex B_Y) \subset \ex B_X$. Note that  for all $x \in G$ the corresponding $(F(x))^*$ and $x^*$ are linear, and Lemma \ref{maj} implies that for all $x \in G$ and $z \in K$ the following equality holds true:
\begin{equation*}\label{eq*}
(F(x))^*(F(z))=x^*(z).
\end{equation*}

Let us define the map $H:A\to B$ such that $H(x^*)=(F(x))^*$.
For the correctness of this definition it is necessary to verify for all $x_1, x_2 \in G$  the implication
$$
({x_1}^*={x_2}^*) \Longrightarrow ({F(x_1)}^*={F(x_2)}^*).
$$
Assume for given $x_1, x_2 \in G$ that ${x_1}^*={x_2}^*$.
  In order to check equality ${F(x_1)}^*={F(x_2)}^*$ it is sufficient to verify that ${F(x_1)}^*y ={F(x_2)}^*y$  for $y \in \ex B_Y$, i.e. for $y$ of the form $y = F(x)$ with $x \in K$.
Indeed,
 $$
 {F(x_1)}^*(F(x))={x_1}^*(x)={x_2}^*(x)={F(x_2)}^*(F(x)).
 $$
Let us extend $H$ by linearity to $\tilde H: X^* = \spn(x^*, x\in G) \to Y^*$. For $x^*=\sum_{k=1}^N\lambda_k {x_k}^*$, $x_k \in G$ let $\tilde H(x^*) = \sum_{k=1}^N\lambda_k H({x_k}^*)$.
To verify the correctness of this extension we will prove that
 $$
\left( \sum_{k=1}^N\lambda_k {x_k}^*=\sum_{k=1}^M\mu_k {y_k}^*\right)  \Longrightarrow \left(\sum_{k=1}^N\lambda_k H({x_k}^*)=\sum_{k=1}^M\mu_k H({y_k}^*)\right).
$$
 Again we will prove equality $\sum_{k=1}^N\lambda_k H({x_k}^*)=\sum_{k=1}^M\mu_k H({y_k}^*)$ of functionals only on elements of the form $y = F(x)$ with $x \in K$.
  $$
 \left(\sum_{k=1}^N\lambda_k H({x_k}^*)\right) F(x)=\sum_{k=1}^N\lambda_k {F(x_k)}^*(F(x))=\sum_{k=1}^N\lambda_k {x_k}^*(x) $$
  $$ = \sum_{k=1}^M\mu_k {y_k}^*(x)= \sum_{k=1}^M\mu_k {F(y_k)}^*(F(x))= \left(\sum_{k=1}^M\mu_k H({y_k}^*)\right)F(x).
  $$
Observe that, according to \eqref{eq:AB}, $\tilde H (X^*) = \spn H(A) = \spn B = Y^*$, so
$\tilde H$ is surjective, and consequently, by equality of corresponding dimensions, is bijective. Recall, that $\tilde H(A) =  H(A) = B$, so $\tilde H$ maps $A$ to $B$ bijectively. Applying again \eqref{eq:AB} we deduce
 that $\tilde H(B_{X^*}) = B_{Y^*}$ and $X^*$ is isometric to $Y^*$.  Passing to the duals we deduce that $Y^{**}$ is isometric to  $X^{**}$  (with $\tilde H^*$ being the corresponding isometry), that is $X$ and $Y$ are isometric. So, $B_X$ and $B_Y$ are two copies of the same compact metric space, and the application of  EC-plasticity of compacts \cite[Theorem 1.1]{NaiPioWing} completes the proof.
\end{proof}

\vspace{2mm}
\noindent {\bf Acknowledgement.}
The author is grateful to her scientific advisor Vladimir Kadets for constant help with this project.

\end{document}